\def\thtext#1{
\catcode`@=11
\gdef\@thmcountersep{. #1}
\catcode`@=12}
\long\def\notes#1#2{
\begin{table}[b]
\vspace{#1pt}
\rule{3truecm}{0.3pt}
\vskip4pt
\par\noindent
#2
\end{table}}
\def\thank#1{\parbox{\hsize}{
\par \tolerance=200 \parindent=10pt \small #1.}
\vskip1pt \par\noindent}
\def\subclass#1{\parbox{\hsize}{
\tolerance=200 \parindent=10pt
{\small\it 2010 Mathematical Subject Classification}.
\small #1.}
\vskip1pt \par\noindent}
\def\keywords#1{\parbox{\hsize}{
\tolerance=200 \parindent=10pt
{\it Key words and phrases}. \small #1.} \vskip1pt}
\newtheorem{theorem}{Theorem}[section]
\newtheorem{lemma}{Lemma}[section]
\newcounter{Rk}[section]
\renewcommand{\thtext}{\thesection.\arabic{Rk}}
\newenvironment{remark}{\trivlist\item[\hskip\labelsep{\bf Remark}]
\refstepcounter{Rk}{\bf\thesection.\arabic{Rk}.}}%
{\endtrivlist}
\newcounter{Df}[section]
\renewcommand{\thtext}{\thesection.\arabic{Df}}
\newenvironment{definition}{\trivlist\item[\hskip\labelsep{\bf Definition}]
\par\refstepcounter{Df}{\bf\thesection.\arabic{Df}.}}%
{\endtrivlist}
\newcounter{Ex}[section]
\renewcommand{\thtext}{\thesection.\arabic{Ex}}
{\endtrivlist}
\newcommand{\biqbrdef}{\raisebox{-0.\height}{\centering\includegraphics[width=8cm]{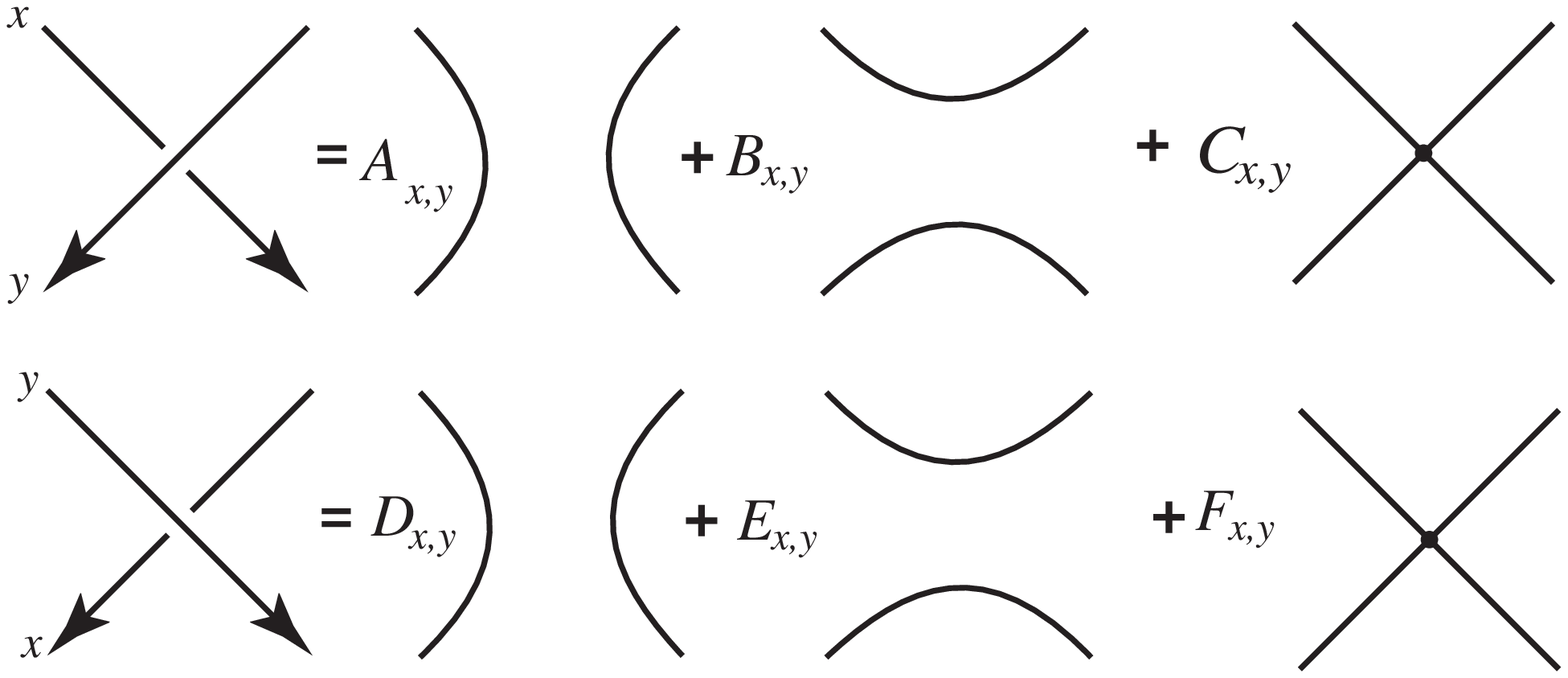}}}
\newcommand{\biqcol}{\raisebox{-0.\height}{\centering\includegraphics[width=6cm]{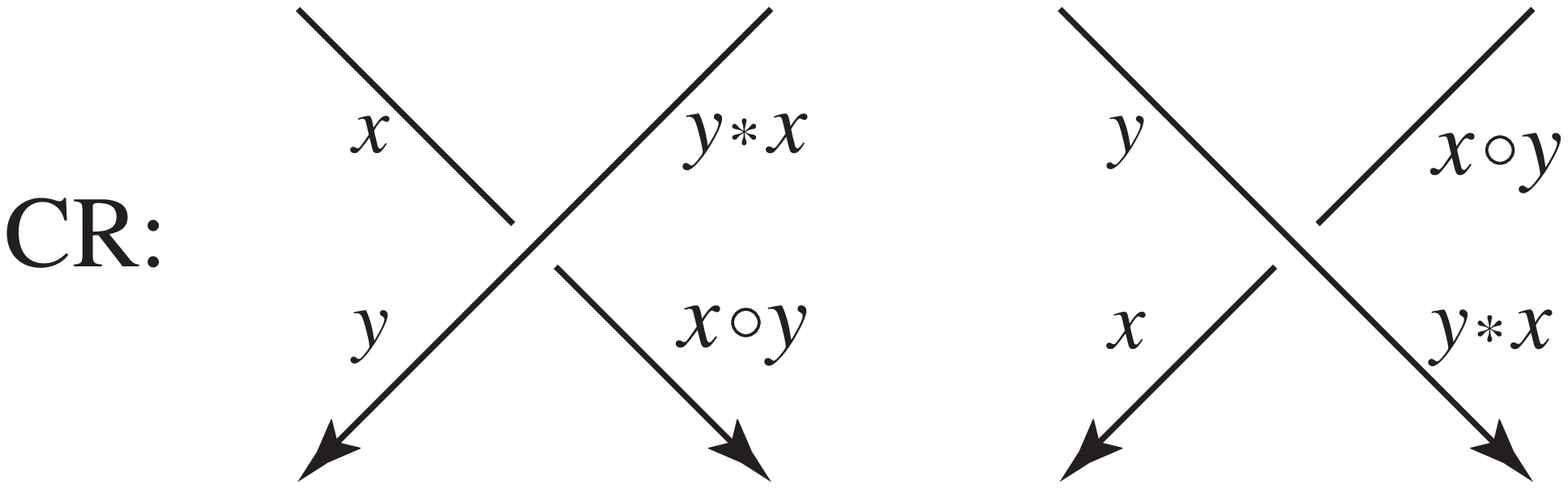}}}
\title{Picture-valued parity-biquandle bracket II. Examples}
\author{Denis P. Ilyutko, Vassily O. Manturov}
\begin{document}
\date{}

\maketitle

\abstract{In~\cite{IM_bq} we constructed the parity-biquandle bracket valued in {\em pictures} (linear combinations of $4$-valent graphs).
We gave no example of classical links such that the parity-biquandle bracket of which is not trivial.

In the present paper we slightly change the notation of the parity-biquandle bracket and give examples of knots and links having a non-trivial parity-biquandle bracket. As a result we get the minimality theorem.

This is the first evidence that graphs (link shadows) appear as invariants of link diagrams instead of just polynomials groups and other tractable objects.}

\notes{0}{
\subclass{57M15, 57M25, 57M27} % 2010 Mathematical Subject Classification
\keywords{Knot, Reidemeister moves, diagram, biquandle, parity, bracket}%
\thank{The first named author was partially supported by grants of RF President NSh -- 6399.2018.1 and RFFI 19--01--00775. The second
named author was supported by the Laboratory of Topology and Dynamics, Novosibirsk State University (grant No. 14.Y26.31.0025 of
the government of the Russian Federation)}}
%The first named author was partially supported by the RFBR (grant no.\ 16-01-00378) and the Programme for the Support of Leading %Scientific Schools of the President of the RF (grant NSh -- 7962.2016.1). The second name author is partially supported by the %Russian Science Foundation, Project No. 16-11-10291

%%%%%%%%%%%%%%%%%%%%%%%%%%%%%%%%%%%%%%%%%%%%%%%%%%%%%%%%%%%%
%%%%%%%%%%%%%%%%%%%%%%%%%%%%%%%%%%%%%%%%%%%%%%%%%%%%%%%%%%%%

 \section{Introduction}

%%%%%%%%%%%%%%%%%%%%%%%%%%%%%%%%%%%%%%%%%%%%%%%%%%%%%%%%%%%%
%%%%%%%%%%%%%%%%%%%%%%%%%%%%%%%%%%%%%%%%%%%%%%%%%%%%%%%%%%%%

Virtual knots, free knots (we omit the over/under information at crossings and replace the cyclic ordering with the cross structure for virtual knots) and other knots have a very important feature which turns out to be trivial for classical knots (but not links): the {\em parity}. The simplest (Gaussian) parity is defined in terms of Gauss diagrams: a chord is {\em even} if it is linked with evenly many chords, otherwise it is odd. Crossings are called {\em even} ({\em odd}) respectively to the chords. The parity allows one to realise the following principle~\cite{IMN2,IMN,IMN3,Mant31,Mant32,Mant33,Mant35,Mant39,Mant36,Mant38,Mant40,Mant41,Mant42,ManIly}:\\
If a knot diagram is complicated enough then it realises itself. The latter means that it appears as a subdiagram in any
diagram equivalent to it. This principle comes from a very easy formula $[K]=\widetilde{K}$, where $K$ on the LHS is a (virtual or free) knot (i.e., a diagram considered up to various moves), and $\widetilde{K}$ on the RHS is a single diagram (the underlying graph) of the knot (in the case of free knots $\widetilde{K}=K$), which is complicated enough and considered as an element of a linear space formally generated by such diagrams.

The bracket $[\cdot]$ is a {\em diagram-valued} invariant of (virtual, free) knots~\cite{IMN2,IMN,IMN3,Mant31,Mant32,Mant33,Mant35,Mant39,Mant36,Mant38,Mant40,Mant41,Mant42,ManIly}. It is important for us to know that
 \begin{enumerate}
  \item[1)]
it is defined by using {\em states} in a way similar to the Kauffman bracket,
  \item[2)]
it is valued not in numbers or (Laurent) polynomials but in {\em diagrams} meaning that we do not completely resolve a knot diagram
leaving some crossings intact.
 \end{enumerate}
It is important to note that for some (completely odd) diagrams, no crossings are smoothed at all.

It is the first appearance of diagram-valued invariants in knot theory. For virtual knots, it allows one to make very strong conclusions about the shape of any diagram by looking just at one diagram. Unfortunately, the value of the bracket at classical knots is trivial.

One of the simplest knot invariants is the colouring invariant: one colours edges of a knot diagram by colours from a given palette and
counts some colourings which are called {\em admissible}. Colouring invariants are a partial case of quandle invariants, which, in turn,
are partial cases of biquandle invariants.

The beautiful idea due to Nelson, Orrison, Rivera~\cite{NOR,NR} allows one to use colours or (bi)quandles in order to enhance various invariants of knots. To this end, one takes a colouring of a knot diagram and a quantum invariant which satisfies a certain skein-relation (say, Kauffman
bracket) and tries to take different coefficients for the states: we take into account the colours of the (short) arcs of the knot diagram. Then one considers a sum over all admissible colourings. Hence, (bi)quandle colourings, though usually not being ``invariants of crossings'', turn out to be a very powerful tool for enhancing knot invariants.

Note that parity can be treated in terms of biquandle colourings. Namely, there is a very simple biquandle which allows one to say whether a crossing is even or odd by looking at colours of edges incident to this crossing.

In~\cite{IM_bq} we brought together the ideas from parity theory~\cite{IMN2,IMN,IMN3,Mant31,Mant32,Mant33,Mant35,Mant39,Mant36,Mant38,Mant40,Mant41,Mant42,ManIly} and the ideas from~\cite{NOR} and constructed the universal biquandle picture-valued invariant of classical and virtual links. We axiomatised the new invariant in such a way that it {\em a priori} dominated both the biquandle bracket and the parity bracket, thus being a very strong invariant of both virtual and classical
links: it is at least as strong as the biquandle invariant for classical knots and at least as strong as the parity bracket for
virtual knots. Unlike Nelson, Orrison and Rivera we can have ``pictures'' as values of our invariant. Note that we did not know any partial examples of invariants for classical links, which were {\em diagram-valued} and non-trivial. In this paper we slightly change the notation of the parity-biquandle bracket and give examples of a classical knot and link having a non-trivial parity-biquandle bracket. As a result we get the minimality theorem.

The paper is organised as follows. In the next section we give the main definitions concerning knots and biquandles. In Sec.~\ref{sec:parbiqbr} we present a construction of a little modified version of the parity-biquandle bracket from~\cite{IM_bq} and give some minimality theorems. In the final section we give examples of a classical link and knot the parity-biquandle bracket at which is non-trivial.

%%%%%%%%%%%%%%%%%%%%%%%%%%%%%%%%%%%%%%%%%%%%%%%%%%%%%%%%%%%%%%%%%%%%%%
\section{Knots and biquandle}\label{sec:kn&biq&par}
%%%%%%%%%%%%%%%%%%%%%%%%%%%%%%%%%%%%%%%%%%%%%%%%%%%%%%%%%%%%%%%%%%%%%%

 \subsection{Knots}

Throughout the paper we consider a knot from combinatorial point of view.

 \begin{definition}
A {\em $4$-valent graph} is a finite graph with each vertex having degree four.

A $4$-valent graph is called a {\em graph with a cross structure} or a {\em framed $4$-graph} if for every vertex the four emanating
half-edges are split into two pairs of half-edges (we have the structure of opposite edges). Half-edges constituting a pair are called {\em opposite}.

A $4$-valent graph is called a {\em graph with a cycle order} if for every vertex the four emanating half-edges are cyclically ordered.

In both cases we also admit diagrams which consist of disjoint unions of the above-mentioned diagrams and several circles with no
vertices.
 \end{definition}

 \begin{remark}
Every $4$-valent graph with a cycle order is a graph with the cross structure.
 \end{remark}

 \begin{definition}
By a ({\em classical}) {\em link diagram} we mean a plane $4$-valent graph with a cycle order and over/undercrossing structure at each vertex. Half-edges constituting a pair are called {\em opposite}. A diagram is called {\em oriented} if each edge is oriented and opposite edges has the same orientation, i.e., the one is oriented to the vertex and the other is from the vertex. The relation of half-edges to be opposite allows one to define the notion of unicursal component and to count the number of unicursal components of a link diagram, see Fig.~\ref{knlivir} for classical knots (links with one unicursal component) and links. Vertices of a diagram are called {\em crossings}.

A {\em virtual link diagram} is a generic immersion of a $4$-valent graph into the plane such that we have a cycle order and over/undercrossing structure at each vertex and mark each edge intersection by a circle, see Fig.~\ref{knlivir}. For virtual links the definitions of components and of an oriented virtual link are the same as for classical links.
 \end{definition}

 \begin{figure}
  \centering\includegraphics[width=280pt]{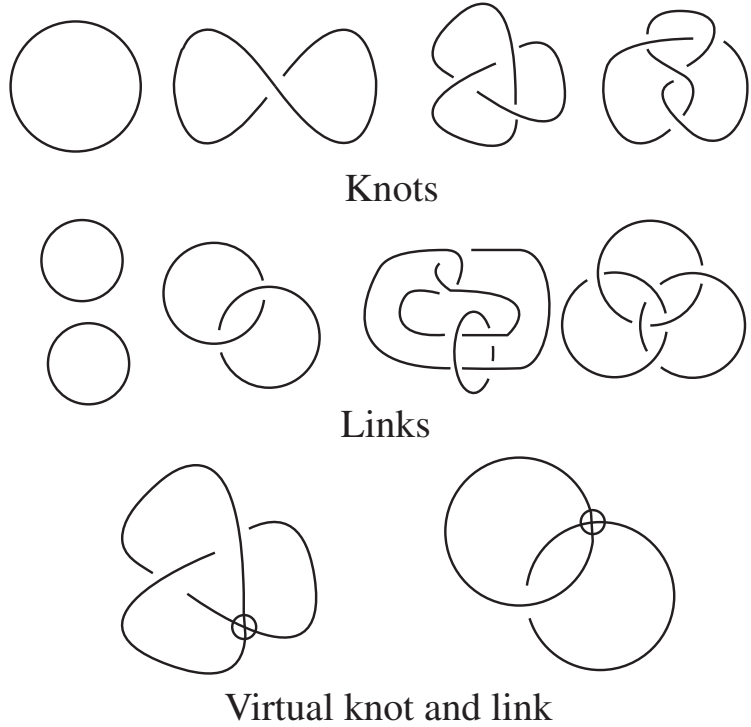}
  \caption{The simplest knots}\label{knlivir}
 \end{figure}

 \begin{definition}
A {\em classical} ({\em virtual}) {\em link} is an equivalence class of classical (virtual) diagrams modulo planar isotopies (diffeomorphisms of the plane on itself preserving the orientation of the plane) and Reidemeister moves (generalised Reidemeister moves). The generalised Reidemeister moves consist of usual Reidemeister moves referring to classical crossings, see Fig.~\ref{rms}, and the {\em detour move} that replaces one arc containing only virtual intersections and self-intersections by another arc of such sort in any other place of the plane, see
Fig.~\ref{detour}.

An {\em oriented classical} ({\em virtual}) {\em link} is an equivalence class of oriented classical (virtual) diagrams modulo planar isotopies (diffeomorphisms of the plane on itself preserving the orientation of the plane) and oriented Reidemeister moves (oriented generalised Reidemeister moves), see Fig.~\ref{rdmst} for oriented Reidemeister moves. The oriented generalised Reidemeister moves consist of usual oriented Reidemeister moves and the detour move.
 \end{definition}

 \begin{remark}
It is worth saying that classical knot theory embeds in virtual knot theory~\cite{Mant35,Mant41} and this fact is not trivial.
 \end{remark}

 \begin{figure}
  \centering\includegraphics[width=300pt]{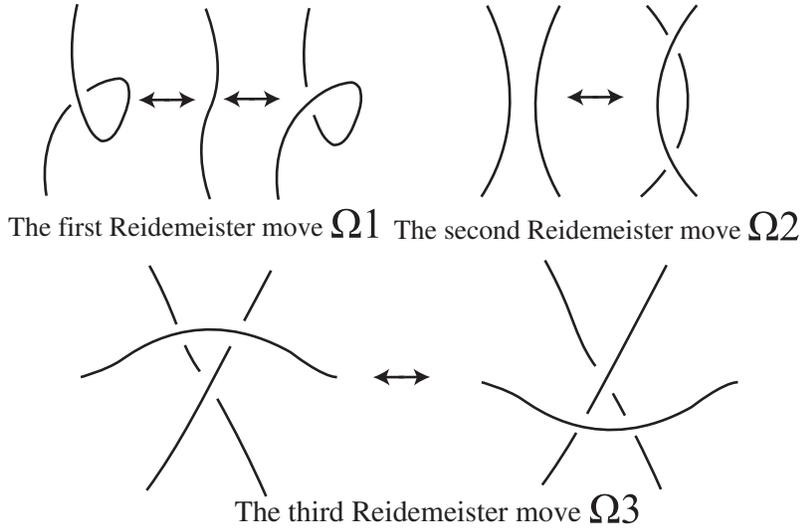}
  \caption{Reidemeister moves $\Omega{1},\,\Omega{2},\,\Omega{3}$}\label{rms}
 \end{figure}

 \begin{figure}
  \centering\includegraphics[width=250pt]{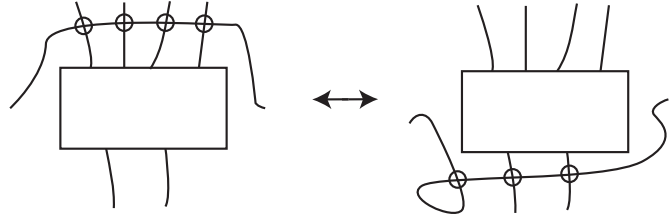}
  \caption{Detour move}\label{detour}
 \end{figure}

 \subsection{Biquandle}

 \begin{definition}\label{def:biq}
A {\em biquandle}~\cite{FJK,HrKa2,KaeKa2,KM1,NOR,NR} is a set $X$ with two binary operations $\circ,\ast\colon X\times X\to X$ satisfying the following axioms:
  \begin{enumerate}
   \item[(R1)]
$x\circ x=x\ast x$ for $\forall\, x\in X$,
   \item[(R2)]
for any $y\in X$ the maps $\alpha_y,\,\beta_y\colon X\to X$ defined by $\alpha_y(x)=x\ast y$, $\beta_y(x)=x\circ y$ are invertible, i.e., for any $z_1,\,z_2\in X$ there exist $x_1,\,x_2\in X$ such that $x_1\ast y=z_1$, $x_2\circ y=z_2$,
   \item[(R3)]
the map $S\colon X\times X\to X\times X$ defined by $S(x,y)=(y\ast x,x\circ y)$ is invertible, i.e., for any $(z,w)\in X\times X$ there exists $(x,y)\in X\times X$ such that $(y\ast x,x\circ y)=(z,w)$,
   \item[(R4)]
the {\em exchange laws} holds\/:
  \begin{gather*}
(x\circ z)\circ(y\circ z)=(x\circ y)\circ(z\ast y),\\
(y\circ z)\ast(x\circ z)=(y\ast x)\circ(z\ast x),\\
(z\ast x)\ast(y\ast x)=(z\ast y)\ast(x\circ y).
  \end{gather*}
  \end{enumerate}

If $X$ and $Y$ are biquandles then a {\em biquandle homomorphism} is a map $f\colon X\to Y$ such that $f(x*y)=f(x)*f(y)$ and $f(x\circ y)=f(x)\circ f(y)$ for $\forall\,x,\,y\in X$.
 \end{definition}

 \begin{definition}
Let $X$ be a finite biquandle and $L$ be an oriented link diagram with $n$ crossings.

The {\em fundamental biquandle} $\mathcal{B}(L)$ of $L$ is the set with biquandle operations consisting of equivalence classes of words in a set of generators corresponding to the edges of $L$ modulo the equivalence relation generated by the crossing relations
 \begin{center}
\biqcol
  \end{center}
of $L$ and the biquandle axioms:
 $$
\mathcal{B}(L)=\langle x_1,\dots,x_{2n}\,|\,\mathrm{CR},\,\mathrm{R1},\,\mathrm{R2},\,\mathrm{R3},\,\mathrm{R4}\rangle.
 $$

A {\em biquandle colouring} (or an {\em $X$-colouring}\/) of $L$ is an assignment of elements of $X$ to the edges in $L$ such that the crossing relations are satisfied at every crossing, i.e., it is a biquandle homomorphism $f\colon\mathcal{B}(L)\to X$. The set of biquandle colourings of $L$ is identified with the set $\mathrm{Hom}(\mathcal{B}(L),X)$ of biquandle homomorphisms from the fundamental biquandle of $L$ to $X$.
 \end{definition}

 \begin{remark}
The biquandle axioms are the conditions required for every biquandle colouring of the edges in a knot diagram before a move and after the move, see Fig.~\ref{rdmst}. Therefore, the number of biquandle colourings is an invariant.
 \end{remark}

 \begin{figure}
  \centering\includegraphics[width=300pt]{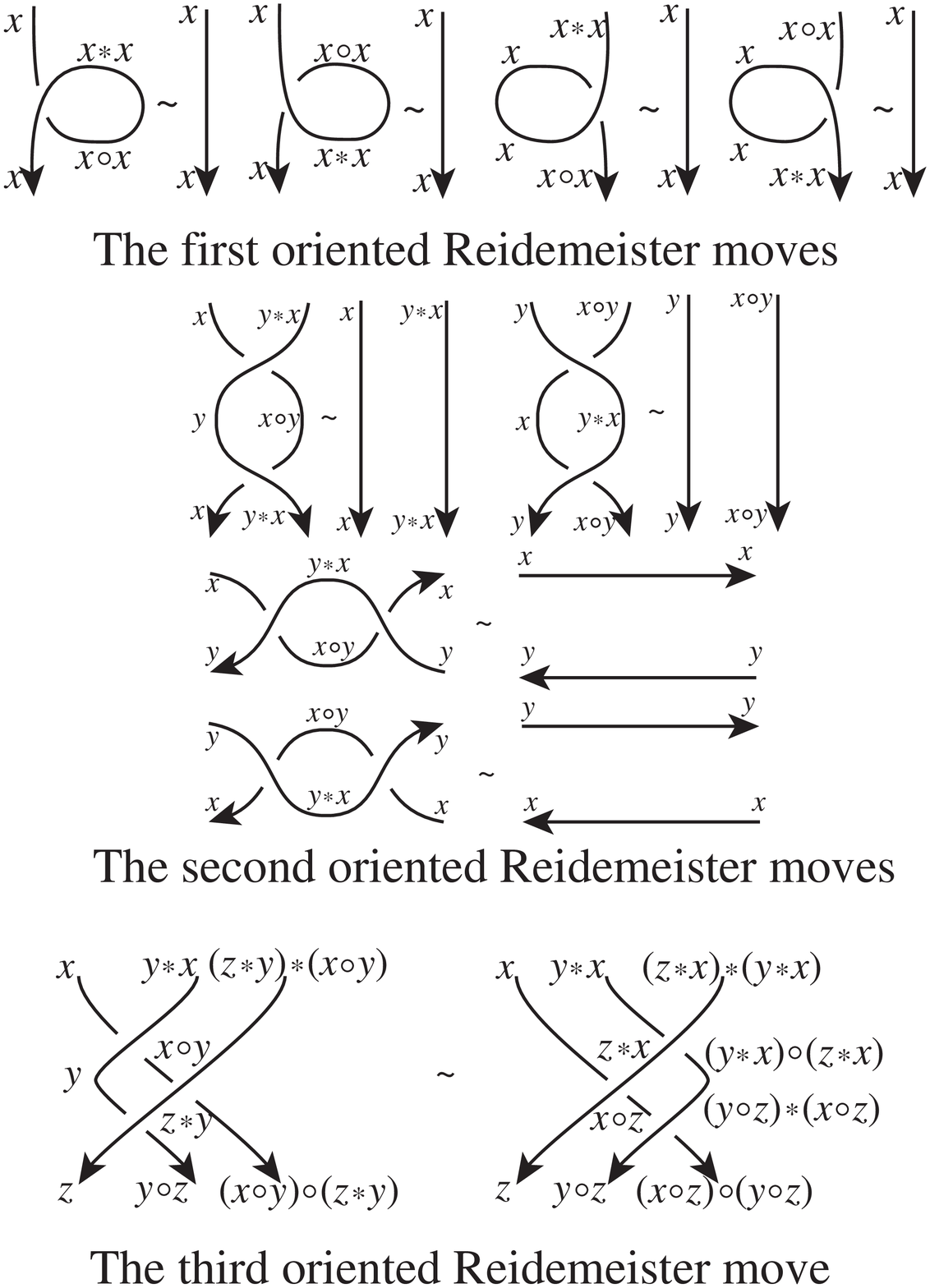}
  \caption{The oriented Reidemeister moves}\label{rdmst}
 \end{figure}

%%%%%%%%%%%%%%%%%%%%%%%%%%%%%%%%%%%%%%%%%%%%%%%%%%%%%%%%%%%%%%%%%%%%%%
\section{The parity-biquandle bracket and minimality}\label{sec:parbiqbr}
%%%%%%%%%%%%%%%%%%%%%%%%%%%%%%%%%%%%%%%%%%%%%%%%%%%%%%%%%%%%%%%%%%%%%%

In this section we use the construction of the parity-biquandle bracket from~\cite{IM_bq}.

Let $X$ be a finite biquandle and  $L$ be an oriented virtual link diagram. Let us consider the following set of skein relations:
 \begin{center}
\biqbrdef
  \end{center}
(virtual crossings are disregarded). Here $A_{x,y}$, $B_{x,y}$, $C_{x,y}$, $D_{x,y}$, $E_{x,y}$ and $F_{x,y}$, $x,\,y\in X$, are
variables.

Our purpose is to write down a whole set of relations such that the skein relations give rise to a picture-valued link invariant.

Let $\delta$ be a formal variable.

Let us consider two sets: $\mathfrak{G}_{1,\delta}$ is the set of all equivalence classes of $4$-valent graphs with a cross structure
modulo the second Reidemeister move, see Fig.~\ref{rdmst2fn}, and $L\sqcup \bigcirc=\delta L$, and $\mathfrak{G}_{2,\delta}$ is the
set of all equivalence classes of $4$-valent graphs with a cross structure modulo the first Reidemeister move, see Fig.~\ref{rdmst1fn}, the second Reidemeister move and $L\sqcup \bigcirc=\delta L$.

 \begin{figure}
  \centering\includegraphics[width=170pt]{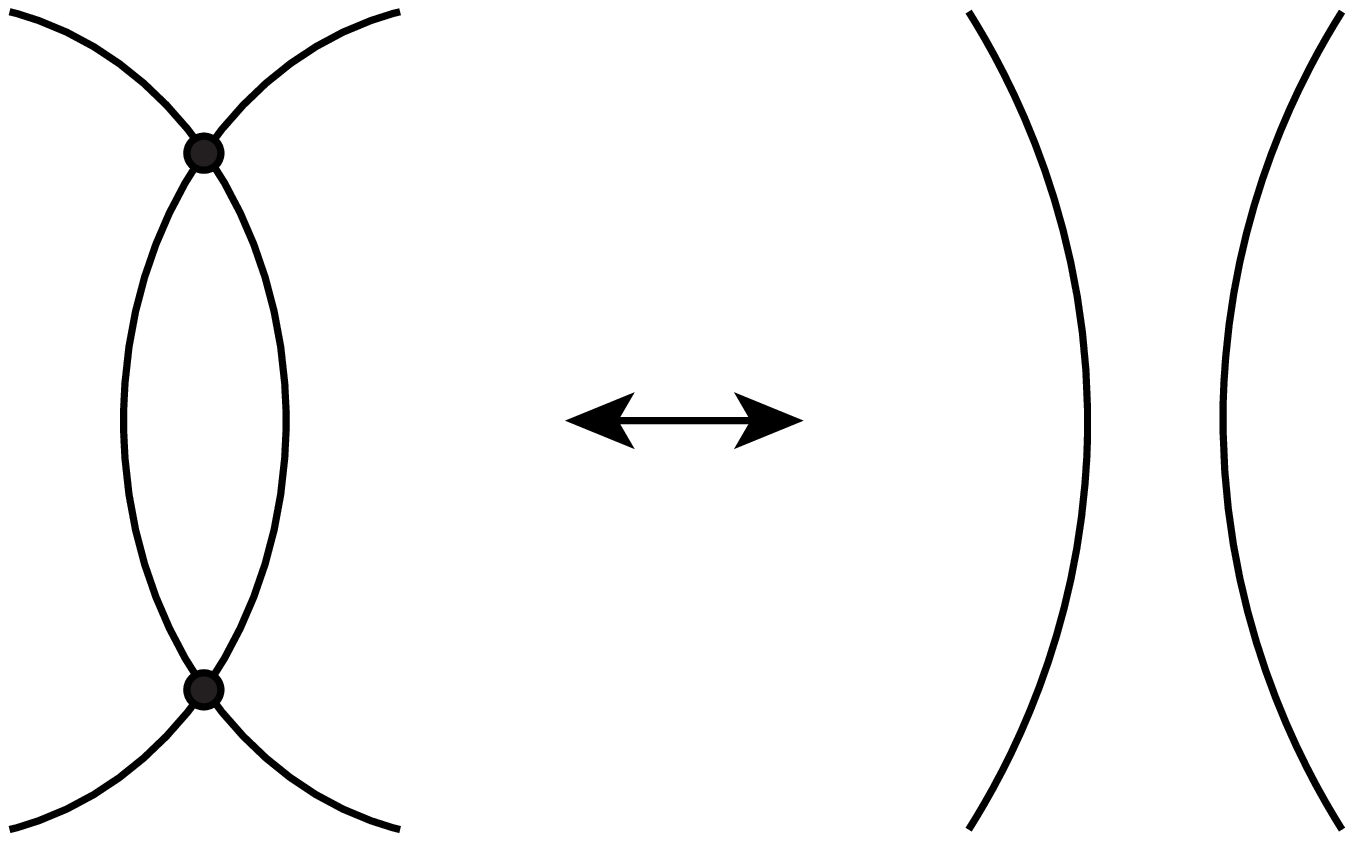}
  \caption{Second Reidemeister move}\label{rdmst2fn}
 \end{figure}

  \begin{figure}
  \centering\includegraphics[width=100pt]{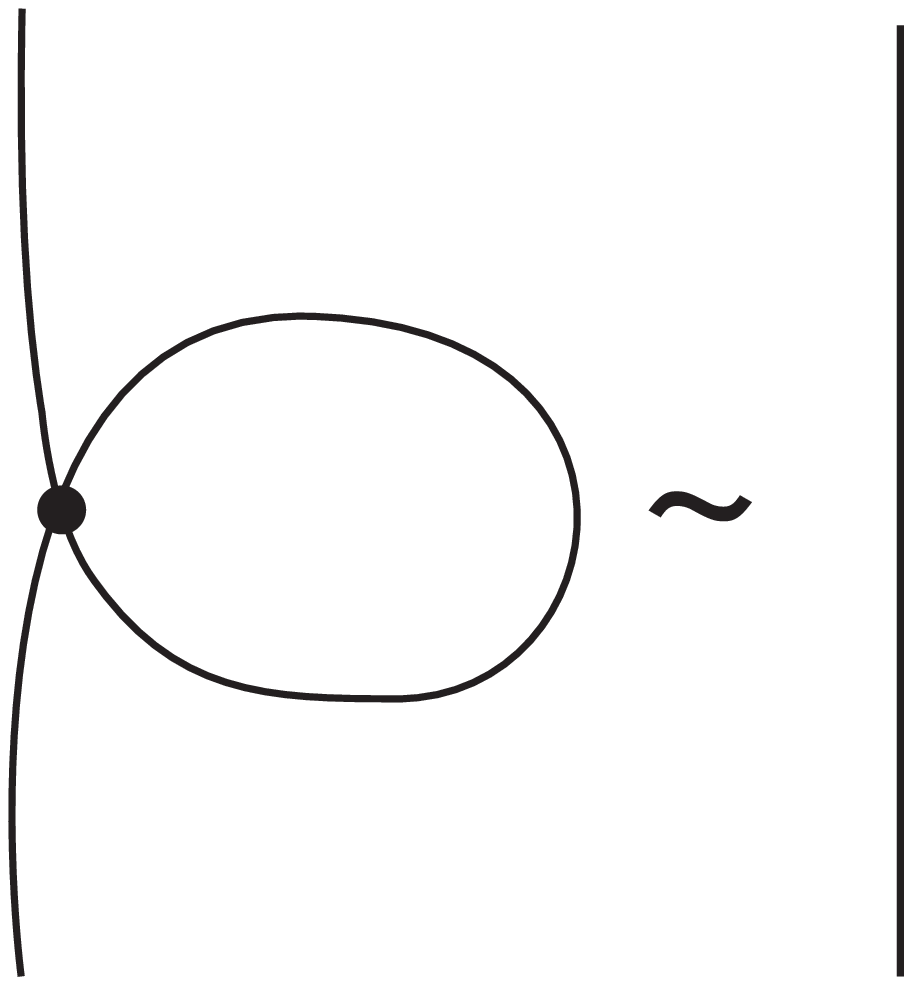}
  \caption{First Reidemeister move}\label{rdmst1fn}
 \end{figure}

The values of the new brackets lie in the module $R\mathfrak{G}_{i,\delta}$, $i=1,2$, where $R$ is a quotient ring of the ring $\mathbb{Z}[A_{x,y},B_{x,y},C_{x,y},D_{x,y},E_{x,y},F_{x,y}\,|\,x,y\in X]$ of polynomials at the variables $A_{x,y}$, $B_{x,y}$, $C_{x,y}$, $D_{x,y}$, $E_{x,y}$, $F_{x,y}$, $x,\,y\in X$, modulo some relations. To get these relations let us apply skein relations to the oriented Reidemeister moves, see Fig.~\ref{rdmst}.

The first Reidemeister moves, see Fig.~\ref{bbr1}, give us the following relations:
 $$
\delta A_{x,x}+B_{x,x}+C_{x,x}=1,\quad \delta
D_{x,x}+E_{x,x}+F_{x,x}=1
 $$
for $\forall\,x\in X$ in the case of $\mathfrak{G}_{i,\delta}$, $i=1,2$, and additional relations
 $$
C_{x,x}=F_{x,x}=0
 $$
for $\forall\,x\in X$ in the case of $\mathfrak{G}_{1,\delta}$.

 \begin{figure}
  \centering\includegraphics[width=400pt]{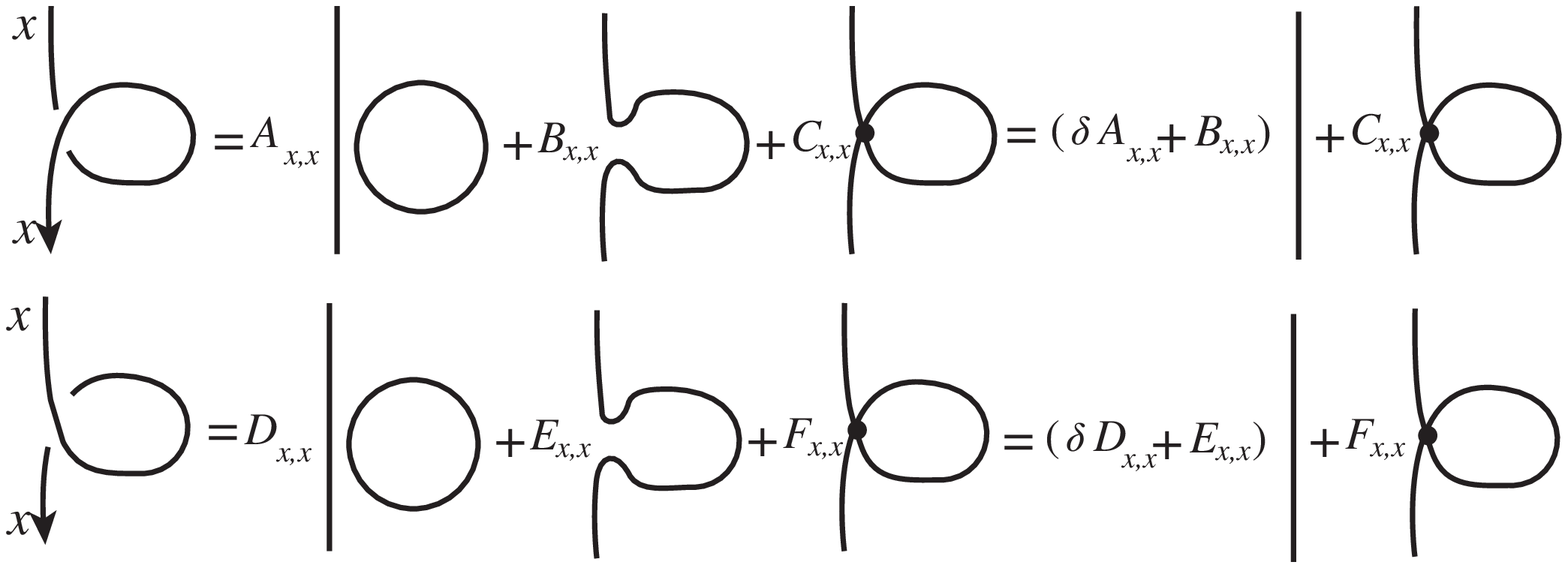}
  \caption{Relations from $\Omega{1}$}\label{bbr1}
 \end{figure}

The first two versions of second Reidemeister moves where the strands are oriented in the same direction, see Fig.~\ref{bbr21},
give us the following relations:
 \begin{gather*}
A_{x,y}D_{x,y}+C_{x,y}F_{x,y}=1,\quad
A_{x,y}F_{x,y}+C_{x,y}D_{x,y}=0,\\
B_{x,y}F_{x,y}+C_{x,y}E_{x,y}+A_{x,y}E_{x,y}+B_{x,y}D_{x,y}+\delta
B_{x,y}E_{x,y}=0
 \end{gather*}
for $\forall\,x,\,y\in X$ in the case of $\mathfrak{G}_{i,\delta}$, $i=1,2$, and additional relations
 \begin{gather*}
B_{x,y}F_{x,y}=C_{x,y}E_{x,y}=0,
 \end{gather*}
for $\forall\,x,\,y\in X$ in the case of $\mathfrak{G}_{1,\delta}$.

 \begin{figure}
  \centering\includegraphics[width=400pt]{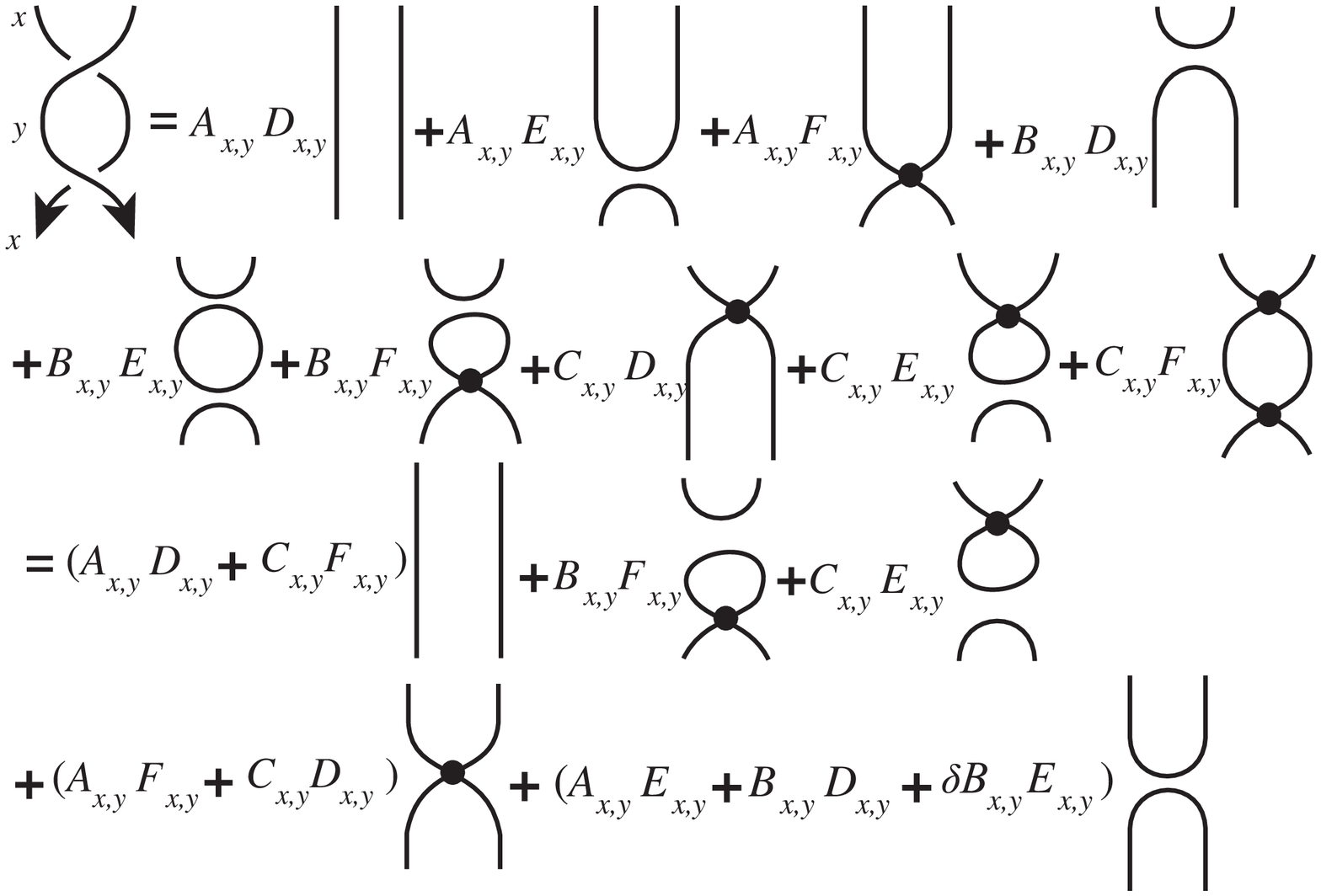}
  \caption{Relations from $\Omega{2}$}\label{bbr21}
 \end{figure}

The last two versions of second Reidemeister moves where the strands are oriented in opposite directions, see Fig.~\ref{bbr22}, give us the following additional relations:
 \begin{gather*}
B_{x,y}E_{x,y}+C_{x,y}F_{x,y}=1,\quad
B_{x,y}F_{x,y}+C_{x,y}E_{x,y}=0,\\
A_{x,y}F_{x,y}+C_{x,y}D_{x,y}+A_{x,y}E_{x,y}+B_{x,y}D_{x,y}+\delta A_{x,y}D_{x,y}=0
 \end{gather*}
for $\forall\,x,\,y\in X$ in the case of $\mathfrak{G}_{i,\delta}$, $i=1,2$, and additional relations
 \begin{gather*}
A_{x,y}F_{x,y}=C_{x,y}D_{x,y}=0
 \end{gather*}
for $\forall\,x,\,y\in X$ in the case of $\mathfrak{G}_{1,\delta}$.

 \begin{figure}
  \centering\includegraphics[width=400pt]{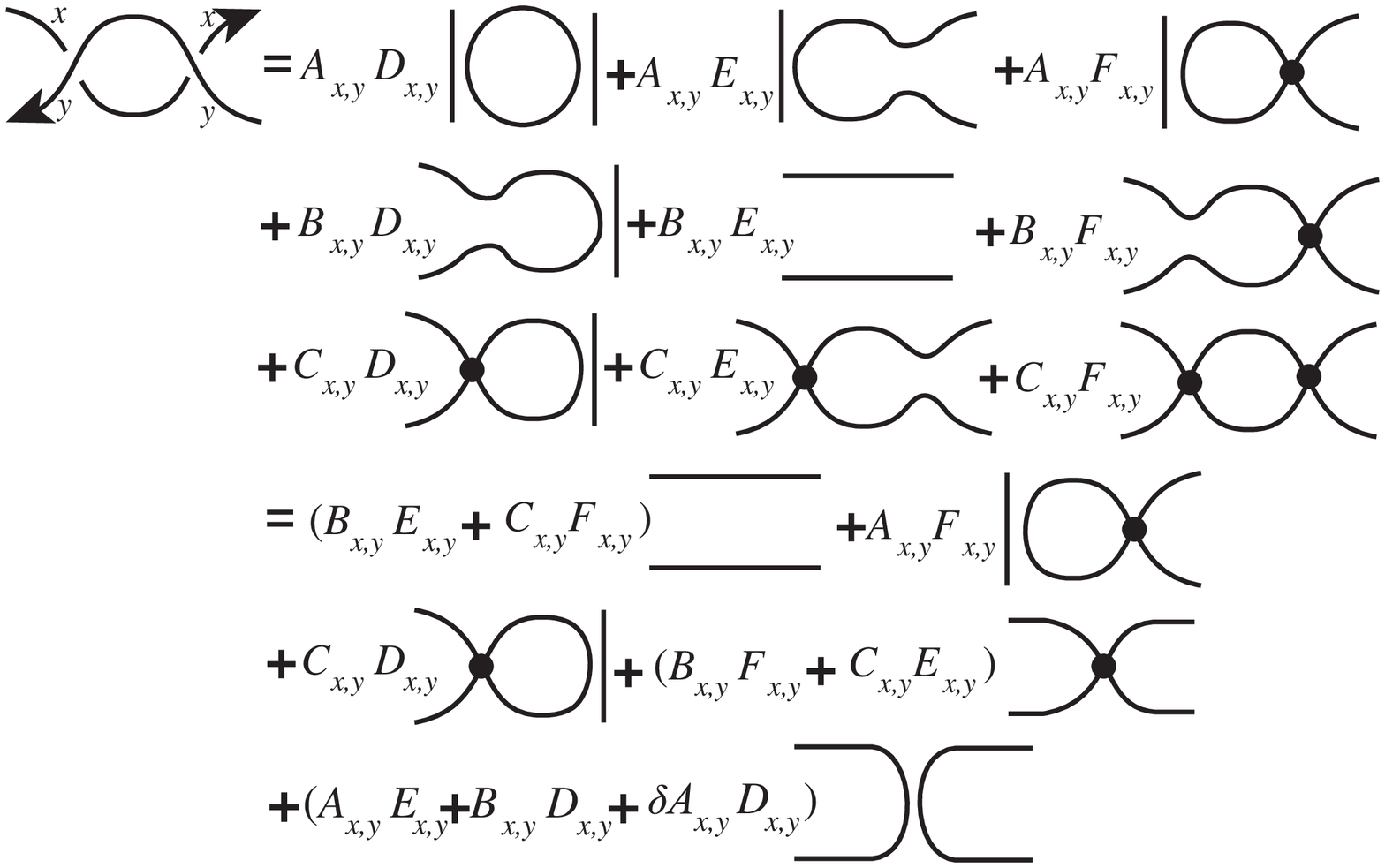}
  \caption{Relations from $\Omega{2}$}\label{bbr22}
 \end{figure}

The third Reidemeister moves, see Fig.~\ref{bbr31},~\ref{bbr32}, give us the following relations:
 \begin{gather*}
A_{x,y}A_{y,z}A_{x\circ y,z\ast y}+C_{x,y}C_{y,z}A_{x\circ y,z\ast y}=A_{x,z}A_{y\ast x,z\ast x}A_{x\circ z,y\circ z}+A_{x,z}C_{y\ast x,z\ast x}C_{x\circ z,y\circ z},\\
A_{x,y}B_{y,z}B_{x\circ y,z\ast y}+C_{x,y}B_{y,z}C_{x\circ y,z\ast y}=B_{x,z}B_{y\ast x,z\ast x}A_{x\circ z,y\circ z}+C_{x,z}B_{y\ast x,z\ast x}C_{x\circ z,y\circ z},\\
B_{x,y}A_{y,z}B_{x\circ y,z\ast y}+B_{x,y}C_{y,z}C_{x\circ y,z\ast y}=B_{x,z}A_{y\ast x,z\ast x}B_{x\circ z,y\circ z}+C_{x,z}C_{y\ast x,z\ast x}B_{x\circ z,y\circ z},\\
A_{x,y}C_{y,z}A_{x\circ y,z\ast y}+C_{x,y}A_{y,z}A_{x\circ y,z\ast y}=C_{x,z}A_{y\ast x,z\ast x}A_{x\circ z,y\circ z},\\
A_{x,y}A_{y,z}C_{x\circ y,z\ast y}=A_{x,z}A_{y\ast x,z\ast x}C_{x\circ z,y\circ z}+A_{x,z}C_{y\ast x,z\ast x}A_{x\circ z,y\circ z},\\
A_{x,y}C_{y,z}B_{x\circ y,z\ast y}=B_{x,z}B_{y\ast x,z\ast x}C_{x\circ z,y\circ z}+C_{x,z}B_{y\ast x,z\ast x}A_{x\circ z,y\circ z},\\
B_{x,y}C_{y,z}B_{x\circ y,z\ast y}+B_{x,z}A_{y,z}C_{x\circ y,z\ast y}=B_{x,z}A_{y\ast x,z\ast x}C_{x\circ z,y\circ z},\\
A_{x,y}B_{y,z}C_{x\circ y,z\ast y}+C_{x,z}B_{y,z}B_{x\circ y,z\ast y}=B_{x,z}C_{y\ast x,z\ast x}A_{x\circ z,y\circ z},\\
C_{x,y}A_{y,z}B_{x\circ y,z\ast y}=B_{x,z}C_{y\ast x,z\ast x}B_{x\circ z,y\circ z}+C_{x,z}A_{y\ast x,z\ast x}B_{x\circ z,y\circ z},\\
C_{x,y}C_{y,z}B_{x\circ y,z\ast y}=B_{x,z}C_{y\ast x,z\ast x}C_{x\circ z,y\circ z},\\
A_{x,y}C_{y,z}C_{x\circ y,z\ast y}=C_{x,z}C_{y\ast x,z\ast x}A_{x\circ z,y\circ z},\\
C_{x,y}A_{y,z}C_{x\circ y,z\ast y}=C_{x,z}A_{y\ast x,z\ast x}C_{x\circ z,y\circ z},\\
C_{x,y}C_{y,z}C_{x\circ y,z\ast y}=C_{x,z}C_{y\ast x,z\ast x}C_{x\circ z,y\circ z}=0,
 \end{gather*}
 \begin{gather*}
A_{x,y}A_{y,z}B_{x\circ y,z\ast y}=A_{x,z}B_{y\ast x,z\ast x}A_{x\circ z,y\circ z}+
A_{x,z}A_{y\ast x,z\ast x}B_{x\circ z,y\circ z}\\
+\delta A_{x,z}B_{y\ast x,z\ast x}B_{x\circ z,y\circ
z}+B_{x,z}B_{y\ast x,z\ast x}B_{x\circ z,y\circ z}+ A_{x,z}B_{y\ast
x,z\ast x}C_{x\circ z,y\circ z}\\
+A_{x,z}C_{y\ast x,z\ast x}B_{x\circ z,y\circ z}+C_{x,z}B_{y\ast x,z\ast x}B_{x\circ z,y\circ z},\\
B_{x,z}A_{y\ast x,z\ast x}A_{x\circ z,y\circ z}=B_{x,y}A_{y,z}A_{x\circ y,z\ast y}+A_{x,y}B_{y,z}A_{x\circ y,z\ast y}\\
+\delta B_{x,y}B_{y,z}A_{x\circ y,z\ast y}+B_{x,y}B_{y,z}B_{x\circ
y,z\ast y}+B_{x,y}C_{y,z}A_{x\circ y,z\ast
y}+C_{x,y}B_{y,z}A_{x\circ y,z\ast y}+B_{x,y}B_{y,z}C_{x\circ
y,z\ast y}
   \end{gather*}
for $\forall\,x,\,y,\,z\in X$ in the case of $\mathfrak{G}_{i,\delta}$, $i=1,2$, and additional relations
 \begin{gather*}
B_{x,y}C_{y,z}A_{x\circ y,z\ast y}=C_{x,y}B_{y,z}A_{x\circ y,z\ast y}=B_{x,y}B_{y,z}C_{x\circ y,z\ast y}=0,\\
A_{x,z}B_{y\ast x,z\ast x}C_{x\circ z,y\circ z}=A_{x,z}C_{y\ast x,z\ast x}B_{x\circ z,y\circ z}=C_{x,z}B_{y\ast x,z\ast x}B_{x\circ z,y\circ z}=0
   \end{gather*}
for $\forall\,x,\,y,\,z\in X$ in the case of $\mathfrak{G}_{1,\delta}$.

 \begin{figure}
  \centering\includegraphics[width=420pt]{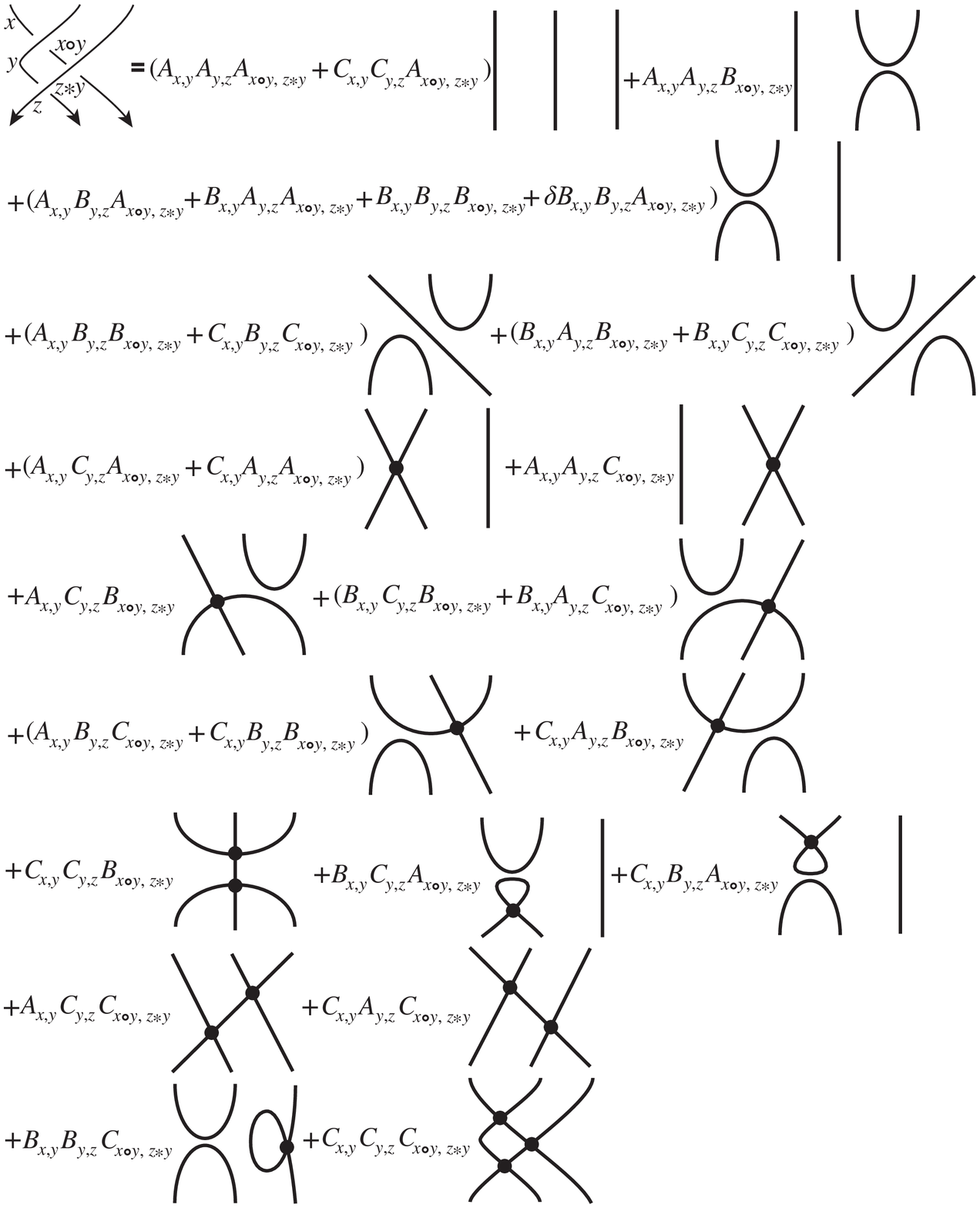}
  \caption{Relations from $\Omega{3}$}\label{bbr31}
 \end{figure}

 \begin{figure}
  \centering\includegraphics[width=440pt]{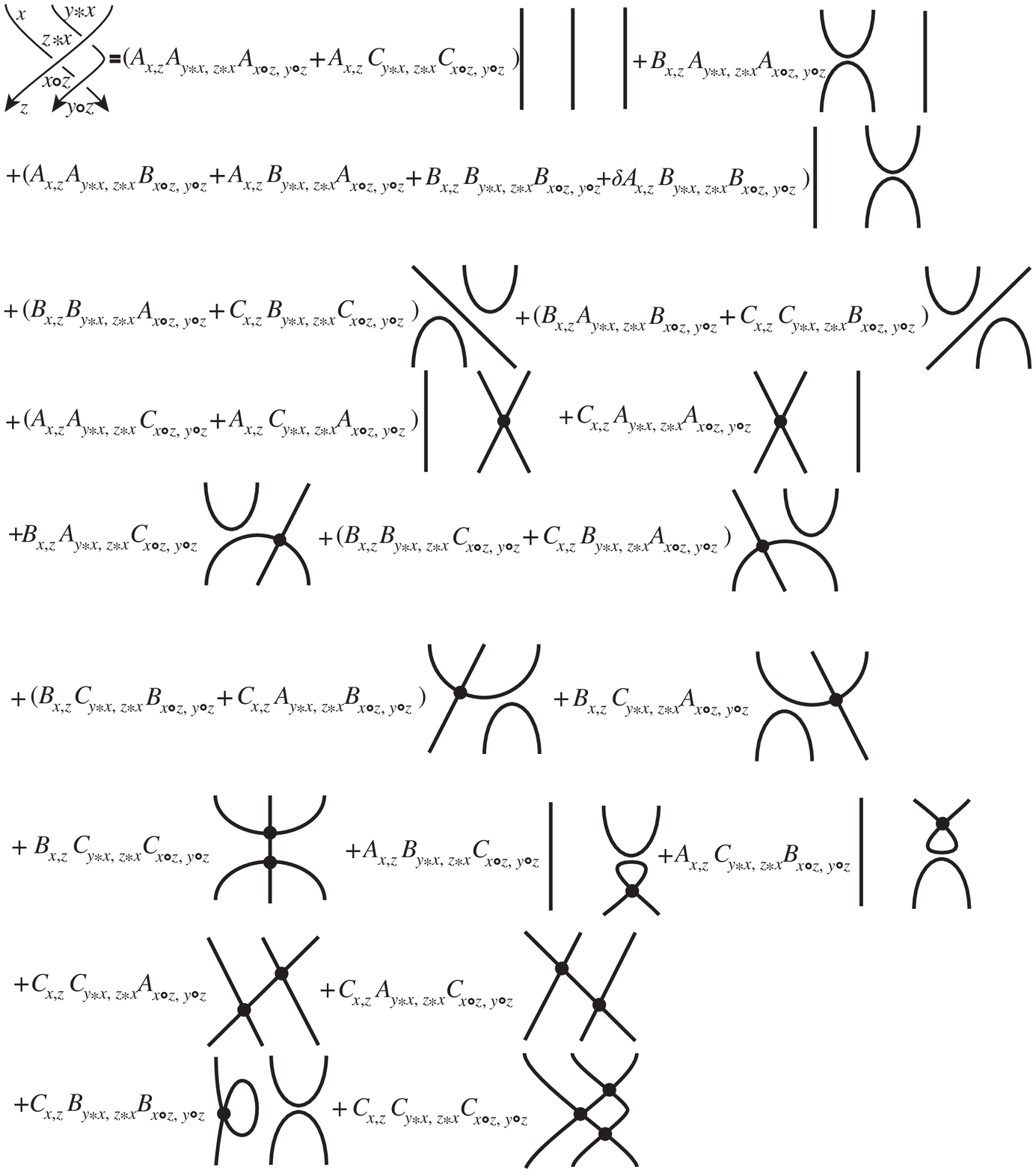}
  \caption{Relations from $\Omega{3}$}\label{bbr32}
 \end{figure}

As a result, we get the following definitions (cf.~\cite{NOR}).

Let $L$ be an oriented (virtual) link diagram with $n$ crossings and let
 $$
\mathcal{B}(L)=\langle x_1,\dots,x_{2n}\,|\,\mathrm{CR},\,\mathrm{R1},\,\mathrm{R2},\,\mathrm{R3},\,\mathrm{R4}\rangle.
 $$
be its fundamental biquandle. There are $3^n$ states of $L$, i.e., at each crossing we have either the positive smoothing, or negative
smoothing, or the graphical vertex  (we disregard virtual crossings). For each state we have the contribution: the product of $n$ variables of $A_{x,y}$, or $B_{x,y}$, or $C_{x,y}$, or $D_{x,y}$, or $E_{x,y}$, or $F_{x,y}$ times $\delta^k$, where $k$ or $k+1$ is the number of circles in the state ($k+1$ takes place in the case when the smoothing consists of $k+1$ circles), and a $4$-valent graph with a cross structure (this $4$-valent graph can be a circle but not a disjoint union of a circle and a $4$-valent graph with a cross structure or circles).

 \begin{definition}
The {\em fundamental parity-biquandle bracket value} $[[L]]$ for $L$ is the sum of the contributions.
 \end{definition}

Let $X$ be a finite biquandle and $f\in\mathrm{Hom}(\mathcal{B}(L),X)$ an $X$-colouring of $L$. In the polynomial ring $\mathbb{Z}[A_{x,y},B_{x,y},C_{x,y},D_{x,y},E_{x,y},F_{x,y}\,|\,x,y\in X]$ we consider the ideals $I_j$, $j=1,2$, generated by the
following polynomials:
 \begin{enumerate}
  \item[$(i)_1$]
for $\forall\,x\in X$,
 $$
\delta A_{x,x}+B_{x,x}-1,\qquad \delta D_{x,x}+E_{x,x}-1\qquad\hbox{and}\qquad C_{x,x},\,F_{x,x},
 $$
  \item[$(ii)_1$]
$\forall\,x,y\in X$,
 \begin{gather*}
A_{x,y}F_{x,y},\qquad C_{x,y}D_{x,y},\qquad B_{x,y}F_{x,y},\qquad C_{x,y}E_{x,y},\qquad A_{x,y}D_{x,y}-B_{x,y}E_{x,y},\\ A_{x,y}D_{x,y}+C_{x,y}F_{x,y}-1\qquad\hbox{and}\qquad \delta A_{x,y}D_{x,y}+A_{x,y}E_{x,y}+B_{x,y}D_{x,y},
  \end{gather*}
  \item[$(iii)_1$]
for $\forall\,x,y,z\in X$,
 \begin{gather*}
A_{x,y}A_{y,z}A_{x\circ y,z\ast y}+C_{x,y}C_{y,z}A_{x\circ y,z\ast y}-(A_{x,z}A_{y\ast x,z\ast x}A_{x\circ z,y\circ z}+A_{x,z}C_{y\ast x,z\ast x}C_{x\circ z,y\circ z}),\\
A_{x,y}B_{y,z}B_{x\circ y,z\ast y}+C_{x,y}B_{y,z}C_{x\circ y,z\ast y}-(B_{x,z}B_{y\ast x,z\ast x}A_{x\circ z,y\circ z}+C_{x,z}B_{y\ast x,z\ast x}C_{x\circ z,y\circ z}),\\
B_{x,y}A_{y,z}B_{x\circ y,z\ast y}+B_{x,y}C_{y,z}C_{x\circ y,z\ast y}-(B_{x,z}A_{y\ast x,z\ast x}B_{x\circ z,y\circ z}+C_{x,z}C_{y\ast x,z\ast x}B_{x\circ z,y\circ z}),\\
A_{x,y}C_{y,z}A_{x\circ y,z\ast y}+C_{x,y}A_{y,z}A_{x\circ y,z\ast y}-C_{x,z}A_{y\ast x,z\ast x}A_{x\circ z,y\circ z},\\
A_{x,y}A_{y,z}C_{x\circ y,z\ast y}-(A_{x,z}A_{y\ast x,z\ast x}C_{x\circ z,y\circ z}+A_{x,z}C_{y\ast x,z\ast x}A_{x\circ z,y\circ z}),\\
A_{x,y}C_{y,z}B_{x\circ y,z\ast y}-(B_{x,z}B_{y\ast x,z\ast x}C_{x\circ z,y\circ z}+C_{x,z}B_{y\ast x,z\ast x}A_{x\circ z,y\circ z}),\\
B_{x,y}C_{y,z}B_{x\circ y,z\ast y}+B_{x,z}A_{y,z}C_{x\circ y,z\ast y}-B_{x,z}A_{y\ast x,z\ast x}C_{x\circ z,y\circ z},\\
A_{x,y}B_{y,z}C_{x\circ y,z\ast y}+C_{x,z}B_{y,z}B_{x\circ y,z\ast y}-B_{x,z}C_{y\ast x,z\ast x}A_{x\circ z,y\circ z},\\
C_{x,y}A_{y,z}B_{x\circ y,z\ast y}-(B_{x,z}C_{y\ast x,z\ast x}B_{x\circ z,y\circ z}+C_{x,z}A_{y\ast x,z\ast x}B_{x\circ z,y\circ z}),\\
C_{x,y}C_{y,z}B_{x\circ y,z\ast y}-B_{x,z}C_{y\ast x,z\ast x}C_{x\circ z,y\circ z},\\
A_{x,y}C_{y,z}C_{x\circ y,z\ast y}-C_{x,z}C_{y\ast x,z\ast x}A_{x\circ z,y\circ z},\\
C_{x,y}A_{y,z}C_{x\circ y,z\ast y}-C_{x,z}A_{y\ast x,z\ast x}C_{x\circ z,y\circ z},\\
B_{x,y}C_{y,z}A_{x\circ y,z\ast y},\,C_{x,y}B_{y,z}A_{x\circ y,z\ast y},\,B_{x,y}B_{y,z}C_{x\circ y,z\ast y},\
C_{x,y}C_{y,z}C_{x\circ y,z\ast y},
 \end{gather*}
 \begin{gather*}
A_{x,z}B_{y\ast x,z\ast x}C_{x\circ z,y\circ z},\,A_{x,z}C_{y\ast x,z\ast x}B_{x\circ z,y\circ z},\\
C_{x,z}B_{y\ast x,z\ast x}B_{x\circ z,y\circ z},\,C_{x,z}C_{y\ast x,z\ast x}C_{x\circ z,y\circ z},\\
A_{x,y}A_{y,z}B_{x\circ y,z\ast y}-(A_{x,z}B_{y\ast x,z\ast x}A_{x\circ z,y\circ z}+A_{x,z}A_{y\ast x,z\ast x}B_{x\circ z,y\circ z}\\
+\delta A_{x,z}B_{y\ast x,z\ast x}B_{x\circ z,y\circ z}+B_{x,z}B_{y\ast x,z\ast x}B_{x\circ z,y\circ z}),\\
B_{x,z}A_{y\ast x,z\ast x}A_{x\circ z,y\circ z}-(B_{x,y}A_{y,z}A_{x\circ y,z\ast y}+A_{x,y}B_{y,z}A_{x\circ y,z\ast y}\\
+\delta B_{x,y}B_{y,z}A_{x\circ y,z\ast y}+B_{x,y}B_{y,z}B_{x\circ y,z\ast y})
   \end{gather*}
 \end{enumerate}
in the case of $\mathfrak{G}_{1,\delta}$ and
 \begin{enumerate}
  \item[$(i)_2$]
for $\forall\,x\in X$,
 $$
\delta A_{x,x}+B_{x,x}+C_{x,x}-1\qquad\hbox{and}\qquad\delta D_{x,x}+E_{x,x}+F_{x,x}-1,
 $$
  \item[$(ii)_2$]
for $\forall\,x,y\in X$,
 \begin{gather*}
A_{x,y}F_{x,y}+C_{x,y}D_{x,y},\qquad B_{x,y}F_{x,y}+C_{x,y}E_{x,y}\qquad A_{x,y}D_{x,y}-B_{x,y}E_{x,y},\\
A_{x,y}D_{x,y}+C_{x,y}F_{x,y}-1\qquad\hbox{and}\qquad\delta A_{x,y}D_{x,y}+A_{x,y}E_{x,y}+B_{x,y}D_{x,y},
 \end{gather*}
  \item[$(iii)_2$]
for $\forall\,x,y,z\in X$,
 \begin{gather*}
A_{x,y}A_{y,z}A_{x\circ y,z\ast y}+C_{x,y}C_{y,z}A_{x\circ y,z\ast y}-(A_{x,z}A_{y\ast x,z\ast x}A_{x\circ z,y\circ z}+A_{x,z}C_{y\ast x,z\ast x}C_{x\circ z,y\circ z}),\\
A_{x,y}B_{y,z}B_{x\circ y,z\ast y}+C_{x,y}B_{y,z}C_{x\circ y,z\ast y}-(B_{x,z}B_{y\ast x,z\ast x}A_{x\circ z,y\circ z}+C_{x,z}B_{y\ast x,z\ast x}C_{x\circ z,y\circ z}),\\
B_{x,y}A_{y,z}B_{x\circ y,z\ast y}+B_{x,y}C_{y,z}C_{x\circ y,z\ast y}-(B_{x,z}A_{y\ast x,z\ast x}B_{x\circ z,y\circ z}+C_{x,z}C_{y\ast x,z\ast x}B_{x\circ z,y\circ z}),\\
A_{x,y}C_{y,z}A_{x\circ y,z\ast y}+C_{x,y}A_{y,z}A_{x\circ y,z\ast y}-C_{x,z}A_{y\ast x,z\ast x}A_{x\circ z,y\circ z},\\
A_{x,y}A_{y,z}C_{x\circ y,z\ast y}-(A_{x,z}A_{y\ast x,z\ast x}C_{x\circ z,y\circ z}+A_{x,z}C_{y\ast x,z\ast x}A_{x\circ z,y\circ z}),\\
A_{x,y}C_{y,z}B_{x\circ y,z\ast y}-(B_{x,z}B_{y\ast x,z\ast x}C_{x\circ z,y\circ z}+C_{x,z}B_{y\ast x,z\ast x}A_{x\circ z,y\circ z}),\\
B_{x,y}C_{y,z}B_{x\circ y,z\ast y}+B_{x,z}A_{y,z}C_{x\circ y,z\ast y}-B_{x,z}A_{y\ast x,z\ast x}C_{x\circ z,y\circ z},\\
A_{x,y}B_{y,z}C_{x\circ y,z\ast y}+C_{x,z}B_{y,z}B_{x\circ y,z\ast y}-B_{x,z}C_{y\ast x,z\ast x}A_{x\circ z,y\circ z},\\
C_{x,y}A_{y,z}B_{x\circ y,z\ast y}-(B_{x,z}C_{y\ast x,z\ast x}B_{x\circ z,y\circ z}+C_{x,z}A_{y\ast x,z\ast x}B_{x\circ z,y\circ z}),\\
C_{x,y}C_{y,z}B_{x\circ y,z\ast y}-B_{x,z}C_{y\ast x,z\ast x}C_{x\circ z,y\circ z},\\
A_{x,y}C_{y,z}C_{x\circ y,z\ast y}-C_{x,z}C_{y\ast x,z\ast x}A_{x\circ z,y\circ z},\\
C_{x,y}A_{y,z}C_{x\circ y,z\ast y}-C_{x,z}A_{y\ast x,z\ast x}C_{x\circ z,y\circ z},\\
C_{x,y}C_{y,z}C_{x\circ y,z\ast y},\,C_{x,z}C_{y\ast x,z\ast x}C_{x\circ z,y\circ z},\\
A_{x,y}A_{y,z}B_{x\circ y,z\ast y}-(A_{x,z}B_{y\ast x,z\ast x}A_{x\circ z,y\circ z}+
A_{x,z}A_{y\ast x,z\ast x}B_{x\circ z,y\circ z}\\
+\delta A_{x,z}B_{y\ast x,z\ast x}B_{x\circ z,y\circ z}+B_{x,z}B_{y\ast x,z\ast x}B_{x\circ z,y\circ z}+ A_{x,z}B_{y\ast
x,z\ast x}C_{x\circ z,y\circ z}\\
+A_{x,z}C_{y\ast x,z\ast x}B_{x\circ z,y\circ z}+C_{x,z}B_{y\ast x,z\ast x}B_{x\circ z,y\circ z}),\\
B_{x,z}A_{y\ast x,z\ast x}A_{x\circ z,y\circ z}-(B_{x,y}A_{y,z}A_{x\circ y,z\ast y}+A_{x,y}B_{y,z}A_{x\circ y,z\ast y}\\
+\delta B_{x,y}B_{y,z}A_{x\circ y,z\ast y}+B_{x,y}B_{y,z}B_{x\circ y,z\ast y}+B_{x,y}C_{y,z}A_{x\circ y,z\ast
y}+C_{x,y}B_{y,z}A_{x\circ y,z\ast y}+B_{x,y}B_{y,z}C_{x\circ y,z\ast y})
   \end{gather*}
 \end{enumerate}
in the case of $\mathfrak{G}_{2,\delta}$.

Let $R_j=\mathbb{Z}[A_{x,y},B_{x,y},C_{x,y},D_{x,y},E_{x,y},F_{x,y}\,|\,x,y\in X]/(I_j)$, $j=1,2$, be the quotient ring. We set the value of the fundamental parity-biquandle bracket value for $L$ in $f$ for $I_j$ to be $[[L]](f)_{j}\in R_j\mathfrak{G}_{j,\delta}$, $j=1,2$.

The {\em parity-biquandle bracket multiset} of $L$ for $X$ is the following multiset:
 $$
[[L]](X)_{j}=\{[[L]](f)_{j}\,|f\in\mathrm{Hom}(\mathcal{B}(L),X)\},
 $$
$j=1,2$.

 \begin{theorem}
The parity-biquandle bracket multiset is an invariant of virtual links. Namely{\em,} if two virtual link diagrams $L_1$ and $L_2$
represent the same link then $[[L_1]](X)_{j}=[[L_2]](X)_{j}$ for any biquandle $X$.
 \end{theorem}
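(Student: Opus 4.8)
The plan is to reduce the multiset equality to a local statement at each generalised oriented Reidemeister move and then to read off from the state-sum expansion that the relations defining $I_j$ and $\mathfrak{G}_{j,\delta}$ are exactly what is needed. By the definition of virtual-link equivalence, $L_1$ and $L_2$ are joined by a finite sequence of planar isotopies, oriented moves $\Omega1,\Omega2,\Omega3$, and detour moves, so it suffices to treat one such move at a time. For a fixed move I must establish two things: first, that there is a bijection $f\mapsto f'$ between $\mathrm{Hom}(\mathcal{B}(L_1),X)$ and $\mathrm{Hom}(\mathcal{B}(L_2),X)$ agreeing outside the local picture; and second, that each matched pair satisfies $[[L_1]](f)_j=[[L_2]](f')_j$ in $R_j\mathfrak{G}_{j,\delta}$. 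The first point is precisely the content of the biquandle axioms (R1)--(R4), as recorded in the remark following the definition of colourings, and it identifies the two multisets entry by entry; the whole remaining task is the second point.

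First I would dispose of the easy moves. A planar isotopy and a detour move alter no classical crossing, and since the skein relations disregard virtual crossings, every state and hence every contribution is literally unchanged, giving $[[L_1]](f)_j=[[L_2]](f')_j$ at once. This leaves the three classical moves, where I fix a colouring $f$, expand both local pictures by the skein relations into finite $R_j$-linear combinations of $4$-valent graphs with cross structure, and compare. For $\Omega1$ the kink resolves into three local pictures with coefficients $A_{x,x},B_{x,x},C_{x,x}$ (or the mirror triple $D_{x,x},E_{x,x},F_{x,x}$); the extra loop produced by the two smoothings is absorbed through $L\sqcup\bigcirc=\delta L$, and in $\mathfrak{G}_{2,\delta}$ the loop of the graphical-vertex term is removed by the graph-level first Reidemeister move, leaving exactly the generator $\delta A_{x,x}+B_{x,x}+C_{x,x}-1$ of $(i)_j$. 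In $\mathfrak{G}_{1,\delta}$ the graph-level $\Omega1$ is unavailable, so the graphical-vertex loop cannot be absorbed and one is forced to impose $C_{x,x}=F_{x,x}=0$, which is the extra $j=1$ part of $(i)_1$.

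For $\Omega2$ I would expand the $3\times3$ states on the two-crossing side, group them by their underlying graph taken modulo the graph-level second Reidemeister move, and demand that the net coefficient of the two-parallel-strands graph be $1$ while every other graph's coefficient vanishes; these demands are precisely the generators $(ii)_j$, the additional $j=1$ generators again reflecting the missing graph-level $\Omega1$. The hard part will be $\Omega3$. It suffices to verify the representative oriented move shown, the other oriented versions being consequences of it together with $\Omega2$. On each side there are $3^3=27$ states; I would label the internal arcs by the colours the biquandle map $S$ induces --- namely $x\circ y$, $z\ast y$, $y\ast x$, $z\ast x$, $x\circ z$, $y\circ z$ --- collect the $27$ contributions on each side into $R_j$-coefficients of the distinct graphs that occur, and identify graphs across the two sides using the graph-level second Reidemeister move (and, for $j=2$, the first move as well). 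Matching the coefficient of each graph type then reproduces exactly the long list $(iii)_j$, whence $[[L_1]](f)_j$ and $[[L_2]](f')_j$ coincide in $R_j\mathfrak{G}_{j,\delta}$.

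The genuine obstacle is the bookkeeping in $\Omega3$: correctly enumerating which of the $27$-versus-$27$ states yield graphs that become isotopic only after the permitted graph-level moves, checking that the biquandle relabelling of arcs lines up the subscripts on the two sides, and confirming that no graph type is over- or under-counted. Once this is done, every coefficient identity needed for invariance is literally a generator of $I_j$, and every graph identification needed is one of the moves built into $\mathfrak{G}_{j,\delta}$; combining the per-colouring invariance with the colouring bijection then yields the equality of multisets.
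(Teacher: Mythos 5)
Your proposal is correct and follows essentially the same route as the paper: reduce to a single (generalised) Reidemeister move, use the biquandle axioms to biject colourings of the two diagrams, and then invoke the relations generating $I_j$ --- which the paper derives in Sec.~\ref{sec:parbiqbr} precisely by expanding both sides of each oriented move --- to get $[[L_1]](f_1)_j=[[L_2]](f_2)_j$ per colouring. The paper's proof is just a terser statement of this argument, with the move-by-move coefficient matching you describe carried out in the text preceding the theorem rather than inside the proof itself.
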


 \begin{proof}
Let $L_1$ is obtained from $L_2$ by a Reidemeister move. It follows from Def.~\ref{sec:kn&biq&par}.\ref{def:biq} that the fundamental biquandles
$\mathcal{B}(L_1)$ and $\mathcal{B}(L_2)$ are isomorphic and to each biquandle colouring $f_1\in \mathrm{Hom}(\mathcal{B}(L_1),X)$ of
$L_1$ it is associated the biquandle colouring $f_2\in\mathrm{Hom}(\mathcal{B}(L_2),X)$ of $L_2$. Then by using relations for
$I_j$, $j=1,2$, we get that $[[L_1]](f_1)_{j}=[[L_2]](f_2)_{j}$.
 \end{proof}

 \begin{definition}
A $4$-valent graph with a cross structure is called {\em $1$-irreducible} ({\em $2$-irreducible}) if no decreasing second (and first) Reidemeister move (moves) can be applied to it.
 \end{definition}

The next lemma can be easily prove by an induction on the set of vertices and using the fact that if we can apply two decreasing Reidemeister moves (we consider only first and second Reidemeister moves) to a $4$-valent graph with a cross structure then after applying one of them we can apply the other one.

 \begin{lemma}[Diamond lemma]\label{lem:diam}
A $j$-irreducible graph{\em,} $j=1,2,$ has a minimal number of vertices amongst $4$-valent graphs with a cross structure being equivalent to it in $\mathfrak{G}_{j,\delta}$. Moreover{\em,} each $4$-valent graph with a cross structure has a unique $j$-irreducible graph{\em,} $j=1,2,$ being equivalent to it in $\mathfrak{G}_{j,\delta}$.
 \end{lemma}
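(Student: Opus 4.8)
The plan is to read the statement as an instance of the abstract diamond lemma (Newman's lemma) for the rewriting system whose objects are $4$-valent graphs with a cross structure and whose reduction steps are the \emph{decreasing} Reidemeister moves permitted in $\mathfrak{G}_{j,\delta}$: the decreasing $\Omega 2$ move when $j=1$, and the decreasing $\Omega 1$ and $\Omega 2$ moves when $j=2$. Write $G\to G'$ when $G'$ is obtained from $G$ by one such decreasing move. Because every move in $\mathfrak{G}_{j,\delta}$ is invertible (an increasing move is the inverse of a decreasing one), the equivalence relation defining $\mathfrak{G}_{j,\delta}$ is exactly the reflexive–symmetric–transitive closure of $\to$, and a $j$-irreducible graph is precisely a $\to$-normal form. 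Disjoint circles carry no vertices, and the relation $L\sqcup\bigcirc=\delta L$ alters neither the vertex count nor the applicability of any Reidemeister move, so it plays no role below and I set it aside.

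First I would record termination: a decreasing $\Omega 1$ move removes one vertex and a decreasing $\Omega 2$ move removes two, so each application of $\to$ strictly decreases the non-negative integer number of vertices. Consequently there is no infinite chain $G_0\to G_1\to\cdots$, every graph reduces to some $j$-irreducible graph in finitely many steps, and $\to$ is well-founded (Noetherian). This is also the source of the induction variable — the number of vertices — anticipated in the paragraph preceding the statement.

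Next comes local confluence, which is the geometric heart of the matter and the step I expect to be the main obstacle. I would verify that whenever $G\to G_1$ and $G\to G_2$ by two distinct decreasing moves, the graphs $G_1$ and $G_2$ admit a common reduct. Here the quoted fact enters: after performing one of the two decreasing moves, the other remains applicable. When the two moves act on disjoint sets of crossings they plainly commute, so applying the remaining move to each of $G_1,G_2$ yields one and the same graph $H$, closing the diamond in a single step on each side. The real work is the overlapping case, where the two bigons (or a bigon and a loop) share a crossing; there one must check, by a finite case analysis of the local pictures, that a residual decreasing move genuinely persists and that the two orders of application reconverge to a common reduct. This is exactly the content packaged in the cited fact, and it is the only place where the concrete combinatorics of the moves is used.

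Finally I would assemble these ingredients via Newman's lemma: a terminating, locally confluent reduction relation is confluent. Confluence yields the Church–Rosser property, so any two equivalent graphs possess a common reduct. The ``moreover'' clause is then immediate — if $G$ reduces to two $j$-irreducible graphs, confluence forces them to share a common reduct, which, being reached from normal forms, equals both; hence the $j$-irreducible graph equivalent to $G$ is unique. For minimality, fix a $j$-irreducible $G_0$ and any $G$ equivalent to it; by Church–Rosser, $G$ and $G_0$ have a common reduct $H$, and since $G_0$ is already irreducible one must have $H=G_0$, so $G\to^{*}G_0$. As every step of $\to$ strictly lowers the vertex count, $G$ has at least as many vertices as $G_0$, which is the asserted minimality. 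I would add that this is precisely the direct well-founded induction on the number of vertices suggested in the paper, the inductive step being the one-step closure supplied by local confluence.
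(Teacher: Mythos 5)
Your proposal takes essentially the same route as the paper: the paper's entire proof is the one-sentence sketch you formalize, namely induction on the number of vertices (your termination step) combined with the fact that when two decreasing moves apply, performing one leaves the other available (your local confluence step), assembled into uniqueness and minimality of irreducible forms exactly as you do via Newman's lemma. If anything you are more explicit than the paper, which merely asserts the commuting fact and does not examine the overlapping-bigon configurations that you correctly single out as the only place where the actual combinatorics of the moves must be checked.
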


By using Lemma~\ref{lem:diam} we can consider $[[L]](f)_{j}$ to be the linear combination of $j$-irreducible graphs for any virtual link $L$ and any $X$-colouring $f$ of $L$.

 \begin{definition}
A {\em leading term} in $[[L]](f)_{j}$ is any $j$-irreducible graph having a non-zero coefficient in $[[L]](f)_{j}$ and having the maximal number of vertices amongst all $j$-irreducible graphs with non-zero coefficients in $[[L]](f)_{j}$.
 \end{definition}

From the construction of the parity-biquandle bracket we get the following theorem.

 \begin{theorem}[cf.~\cite{IMN2,IMN,IMN3,Mant31,Mant32,Mant33,Mant35,Mant39,Mant36,Mant38,Mant40,Mant41,Mant42,ManIly}]
Let $G$ be a leading term of $[[L]](f)_{j}$. Then the number of crossings of any link being equivalent to $L$ is greater than or equal to the number of vertices of $G$. Moreover{\em,} if the number of vertices of $G$ equals the number of crossings of $L$ then $L$ is a minimal virtual link in the sense that any virtual link being equivalent to $L$ contains $L$ after smoothing some its vertices.
 \end{theorem}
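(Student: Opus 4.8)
The plan is to combine invariance with the state-sum structure of the bracket, following the leading-term argument standard in parity theory. First I would fix the colouring $f$ and recall that, by the invariance theorem established just above, whenever a virtual link diagram $L'$ is equivalent to $L$ the Reidemeister correspondence yields a colouring $f'$ of $L'$ with $[[L']](f')_{j}=[[L]](f)_{j}$ in $R_j\mathfrak{G}_{j,\delta}$. Consequently the element $[[L]](f)_{j}$, and in particular its collection of leading terms, is independent of the chosen diagram; this is the only place the previous theorem is used.

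Next I would examine the state sum for an arbitrary equivalent diagram $L'$ with $n'$ crossings. By construction $[[L']](f')_{j}$ is a sum over the $3^{n'}$ states, each of which resolves every crossing either by one of the two smoothings (variables $A,B$ or $D,E$) or by the graphical vertex (variables $C,F$), contributing a monomial times $\delta^{k}$ times a single $4$-valent graph with cross structure $G_s$. Since the vertices of $G_s$ are exactly the crossings left as graphical vertices, $|V(G_s)|\le n'$. Applying Lemma~\ref{lem:diam}, each $G_s$ is equivalent in $\mathfrak{G}_{j,\delta}$ to a unique $j$-irreducible graph $\overline{G}_s$ with $|V(\overline{G}_s)|\le|V(G_s)|\le n'$, so $[[L']](f')_{j}$ is a combination of $j$-irreducible graphs on at most $n'$ vertices. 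As $G$ is a leading term it is $j$-irreducible with non-zero coefficient in $[[L]](f)_{j}=[[L']](f')_{j}$, and by the uniqueness clause of Lemma~\ref{lem:diam} it must coincide with some $\overline{G}_s$; hence $|V(G)|\le n'$. Since $L'$ was arbitrary, this proves the first assertion.

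For the minimality statement I would assume $|V(G)|=n$, the number of crossings of $L$; the first assertion then gives $n\le n'$ for every equivalent $L'$, so $L$ realises the minimal crossing number. Applying the state analysis to $L$ itself, the only state reaching $n$ vertices is the one resolving every crossing as a graphical vertex, whose graph is the underlying $4$-valent graph $\widetilde{L}$; all other state graphs have fewer than $n$ vertices and cannot reduce up to $n$, so $G=\widetilde{L}$ and this graph is already $j$-irreducible. Now fix any equivalent $L'$. Because $G$ has the maximal vertex number among the $j$-irreducible graphs that occur and carries a non-zero coefficient in $[[L']](f')_{j}$, at least one state $s$ of $L'$ must satisfy $\overline{G}_s=G=\widetilde{L}$, whence $G_s$ reduces to $\widetilde{L}$ by decreasing Reidemeister moves and $|V(G_s)|\ge n$. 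Reading this backwards, smoothing the crossings of $L'$ outside $V(G_s)$ produces $G_s$, which in $\mathfrak{G}_{j,\delta}$ contains $\widetilde{L}$; hence $L'$ contains $L$ after smoothing some of its vertices.

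The main obstacle is the cancellation bookkeeping in the last step: I must ensure that the non-zero coefficient of the maximal graph $G$ is genuinely carried by an uncancelled state whose reduced graph is exactly $G$, and not produced by some accidental combination of smaller contributions. This is precisely where the maximality of $|V(G)|$ is decisive, since no graph with more vertices can appear and graphs with strictly fewer vertices cannot contribute to the coefficient of $G$, while the uniqueness of the $j$-irreducible representative in Lemma~\ref{lem:diam} guarantees that the surviving maximal term is well defined.
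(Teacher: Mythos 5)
Your proof is correct and is precisely the argument the paper leaves implicit: the paper gives no written proof beyond the remark that the theorem follows from the construction of the bracket, and your state-sum/leading-term argument --- invariance of $[[L]](f)_{j}$ under the Reidemeister correspondence, the bound $|V(\overline{G}_s)|\le |V(G_s)|\le n'$ for every state of an equivalent diagram, and the uniqueness clause of Lemma~\ref{lem:diam} forcing some state of any equivalent diagram to have irreducible representative $G=\widetilde{L}$ --- is exactly how the construction yields both assertions. No discrepancy to report.
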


%%%%%%%%%%%%%%%%%%%%%%%%%%%%%%%%%%%%%%%%%%%%%%%%%%%%%%%%%%%%%%%%%%%%%%
\section{Examples}\label{sec:examples}
%%%%%%%%%%%%%%%%%%%%%%%%%%%%%%%%%%%%%%%%%%%%%%%%%%%%%%%%%%%%%%%%%%%%%%

In this section we consider two examples: a classical knot and a classical link.

1) Let $K$ be the knot $8_{18}$ from the Rolfsen knot table, see Fig.~\ref{818} for its representation as the closure of the braid.

 \begin{figure}
  \centering\includegraphics[width=80pt]{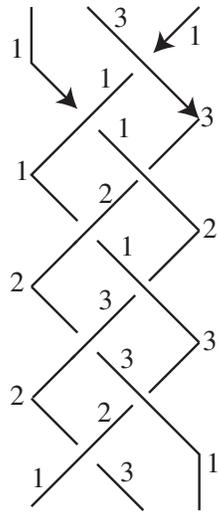}
  \caption{The knot $8_{18}$}\label{818}
 \end{figure}

Let us consider the biquanlde $X=\{1,2,3\}$, where two operations $\circ$ and $\ast$ are defined by the following matrices:
 $$
\circ=\left(\begin{array}{ccc}1&1&1\\3&3&3\\2&2&2\end{array}\right)\qquad\hbox{and}\qquad
\ast=\left(\begin{array}{ccc}1&2&3\\2&3&1\\3&1&2\end{array}\right),
 $$
i.e., $1\circ1=1\circ2=1\circ3=1$, $2\circ1=2\circ2=2\circ3=3$, $3\circ1=3\circ2=3\circ3=2$, $1\ast1=1$, $1\ast2=2$, $1\ast3=3$ and so on. By using this biquandle we colour our diagram by $f$ as shown in Fig.~\ref{818}.

Let us find a leading term of the parity-biquandle bracket $[[K]](f)_2$. It is easy to see that the coefficient before the graph $G_{8_{18}}$ obtained from the knot $8_{18}$ by replacing each crossing with a vertex, see Fig.~\ref{gr818}, equals $C_{11}C_{12}C_{22}C_{21}F_{21}F_{13}F_{31}F_{23}$. By using {\em Mathematica} this monomial does not belong to the ideal $I_2$, where $\delta=1$. Moreover, the graph $G_{8_{18}}$ is $2$-irreducible. Therefore, the leading term of $[[K]](f)_2$ is $G_{8_{18}}$ and the knot $8_{18}$ is minimal, i.e., any equivalent knot contains the knot $8_{18}$ after smoothing some its crossings.

 \begin{figure}
  \centering\includegraphics[width=80pt]{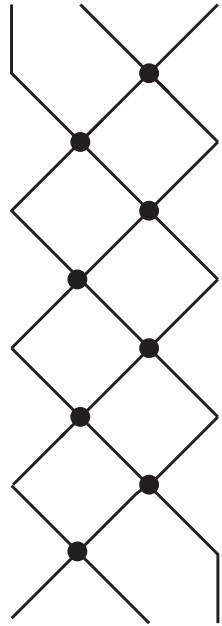}
  \caption{The leading term $G_{8_{18}}$}\label{gr818}
 \end{figure}

2) Let $L$ be the Borromean rings, see Fig.~\ref{bor}.

 \begin{figure}
  \centering\includegraphics[width=150pt]{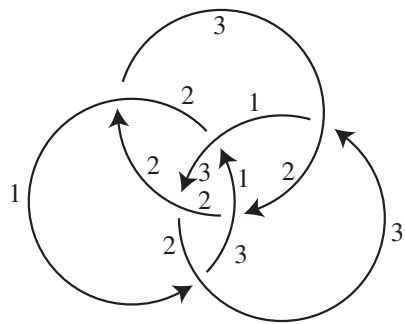}
  \caption{The Borromean rings}\label{bor}
 \end{figure}

Let us consider the biquanlde $X=\{1,2,3\}$, where two operations $\circ$ and $\ast$ are defined by the following matrices:
 $$
\circ=\left(\begin{array}{ccc}3&3&3\\1&1&1\\2&2&2\end{array}\right)\qquad\hbox{and}\qquad
\ast=\left(\begin{array}{ccc}3&3&3\\1&1&1\\2&2&2\end{array}\right),
 $$
i.e., $1\circ1=1\circ2=1\circ3=1\ast1=1\ast2=1\ast3=3$, $2\circ1=2\circ2=2\circ3=2\ast1=2\ast2=2\ast3=1$, $3\circ1=3\circ2=3\circ3=3\ast1=3\ast2=3\ast3=2$. By using this biquandle we colour our diagram by $f$ as shown in Fig.~\ref{bor}.

Let us find a leading term of the parity-biquandle bracket $[[L]](f)_1$. It is easy to see that the coefficient before the graph $G_{bor}$ obtained from the link $L$ by replacing each crossing with a vertex, see Fig.~\ref{bor1}, equals $F_{13}F_{21}C_{21}F_{32}C_{32}C_{13}$. By using {\em Mathematica} this monomial does not belong to the ideal $I_1$, where $\delta=1$. Moreover, the graph $G_{bor}$ is $2$-irreducible. Therefore, the leading term of $[[L]](f)_1$ is $G_{bor}$ and the link $L$ is minimal, i.e., any equivalent link contains the Borromean rings after smoothing some its crossings.

 \begin{figure}
  \centering\includegraphics[width=150pt]{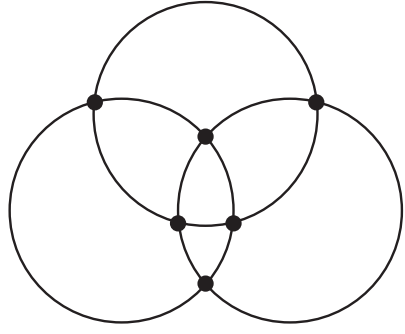}
  \caption{The leading term $G_{bor}$}\label{bor1}
 \end{figure}

 \section*{Acknowledgments}

The authors are grateful to I.\,M.~Nikonov, D.\,A.~Fedoseev and S.~Kim for his interest to the work.

 \end{document}